\newtheorem{theorem}{Theorem}[section]
\newtheorem{proposition}[theorem]{Proposition}
\newtheorem{corollary}[theorem]{Corollary}
\newtheorem{lemma}[theorem]{Lemma}
\newtheorem{question}[theorem]{Question}
\theoremstyle{definition}
\newtheorem{definition}[theorem]{Definition}
\newtheorem{remark}[theorem]{Remark}
\newcommand{\Z}{\mathbb{Z}}
\newcommand{\Q}{\mathbb{Q}}
\newcommand{\R}{\mathbb{R}}
\newcommand{\C}{\mathbb{C}}
\newcommand{\e}{\epsilon} 
\newcommand{\F}{\mathbb{F}}
\newcommand{\Ocal}{\mathcal{O}}
\newcommand{\hhat}{\widehat{h}}
\newcommand{\Qbar}{\overline{\Q}}
\newcommand{\GL}{\mathrm{GL}}
\newcommand{\SL}{\mathrm{SL}}
\newcommand{\Urm}{\mathrm{U}}
\newcommand{\Tr}{\mathrm{Tr}}
\newcommand{\defeq}{\vcentcolon=}
\DeclarePairedDelimiter\ceil{\lceil}{\rceil}
\DeclarePairedDelimiter\floor{\lfloor}{\rfloor}
\title[A height gap in $\GL_d(\Qbar)$ and almost laws]{A height gap in $\GL_d(\Qbar)$ and almost laws}
\author{Lvzhou Chen, Sebastian Hurtado and Homin Lee}
\subjclass{20F60, 22F50, 37B05, 37C85, 37E10, 57R30.}%
\begin{document}
\maketitle

\begin{abstract}
E. Breuillard showed that finite subsets $F$ of matrices in $GL_d(\overline{\mathbb{Q}})$ generating non-virtually solvable groups have normalized height $\widehat{h}(F) \ge \epsilon_d$, for some positive $\epsilon_d >0$. The normalized height $\widehat{h}(F)$ is a measure of the arithmetic size of $F$ and this result can be thought of as a non-abelian analog of Lehmer's Mahler measure problem. We give a new shorter proof of this result. Our key idea relies on the existence of particular word maps in compact Lie groups (known as almost laws) whose image lies close to the identity element.
\end{abstract}

\section{Introduction}

In \cite{Bre},  E. Breuillard proved what can be considered as a non-abelian version of Lehmer's problem about the Mahler measure  of an algebraic number. He showed that if a set $F \subset \GL_d(\Qbar)$ of invertible matrices with algebraic entries generates a group that is not virtually solvable, then its arithmetic height (a measure of its arithmetic complexity)  is bounded below by an absolute constant $\e_d > 0$ independent of $F$. Some consequences of Breuillard's theorem include the existence of a lower bound for the  exponential growth of a non-virtually solvable group of $\GL_d(\C)$,  a strong version of the classical Tits alternative about existence of free subgroups in linear groups, lower bounds in the girth of finite groups of Lie type and very recently some results about the geometry of arithmetic locally symmetric spaces \cite{FHR}. See \cite{Bre2} for a discussion of some of these applications.

In this article, we will provide a more elementary proof of Breuillard's theorem that avoids the use of Bilu's equidistribution theorem \cite{Bil}, results of Zhang \cite{Zha} about small points on algebraic tori, or results about the geometry of Bruhat--Tits buildings. The key idea in our proof relies on the existence of \emph{word maps} in compact Lie groups whose image lies close to the identity element. These words are known as \emph{almost laws} after Andreas Thom \cite{Tho}; See Section~\ref{almostlaws}.

\subsection{Definitions}

Let $K \subset \Qbar$ be a number field and let $V_K$ be the set of absolute values on $K$ up to equivalence, which are either archimedean (corresponding to real and complex embeddings of $K$ in $\R$ or $\C$) or non-archimedean (corresponding to prime ideals in the ring of integers $\Ocal_K$ of $K$). Let $K_v$ be the corresponding completion of $K$ and define $n_v := [K_v: \Q_p]$, if  $v \vert p$ or $n_v := [K_v: \R]$ if $\R \subseteq K_v$. For a vector $x = (x_1,x_2, \dots, x_d) $ in $K_v^d$, we define $\|x\|_{v} := \max_{i =1}^d |x_i|_v$ if $v$ is non-archimedean and $\|x\|_v := \sqrt{|x_1|_v^2 + \dots + |x_d|_v^2 }$ if $\nu$ is archimedean.

For a matrix $A$ in $\GL_d(K)$, let $\|A\|_v$ be the operator norm.  The \emph{height} of $A$ is defined as
$$h(A) := \frac{1}{[K:\Q]} \sum_{v \in V_K} n_v\log^{+} \|A\|_{v},$$ 
where $\log^{+}(x) = \log \max (|x|, 1)$ for a real number $x$. For a finite set $F$ of matrices in $\GL_d(K)$, the height of $F$ is defined as
$$h(F) := \max_{A \in F} h(A),$$ 
and the \emph{normalized height} is defined by $$\hhat (F) := \lim_{n \to \infty} \frac{h (F^n)}{n},$$ 
where $F^n$ consists of all products $n$ elements in $F$.
The limit above exists because $h(F^n)$ is sub-additive, which also implies $h(F) \geq \hhat(F)$.

Our main result is a new proof of the following theorem due to Breuillard \cite{Bre}:

\begin{theorem}\label{thm: gap} For any positive integer $d$, there exists $\e_d > 0$ such that for any symmetric finite set $F \subset \GL_d(\Qbar)$, either
\begin{enumerate}
\item $F$ generates a virtually solvable group, or
\item $\hhat(F) > \e_d$.
\end{enumerate}
\end{theorem}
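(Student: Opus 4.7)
The plan is to prove the contrapositive: if $\widehat{h}(F) < \epsilon_d$ for a sufficiently small $\epsilon_d > 0$, then $F$ satisfies a nontrivial group identity, and hence $\langle F\rangle$ is virtually solvable by the Tits alternative for subgroups of $\GL_d$. The role of almost laws is precisely to furnish this identity.

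I would argue by contradiction: suppose there is a sequence of symmetric sets $F_n \subset \GL_d(\Qbar)$, each generating a non-virtually-solvable group, with $\widehat{h}(F_n) \to 0$. The first main step is a \emph{local compactification}. In a number field $K$ containing the entries of $F_n$, the smallness of $\widehat{h}(F_n)$ should force, at each place $v$ of $K$, the existence of a conjugator $g_{n,v} \in \GL_d(K_v)$ such that $g_{n,v}^{-1}F_n g_{n,v}$ lies close to a compact subgroup of $\GL_d(K_v)$. At archimedean $v$ this target is a Lie subgroup of real dimension at most $d^2$; at non-archimedean $v$ it is a bounded open subgroup of $\GL_d(K_v)$.

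Second, feed these local models into an almost law. For any prescribed $\delta > 0$, Thom's theorem supplies a nontrivial reduced word $w$ in $|F_n|$ letters whose evaluation on any tuple in any compact Lie group of dimension at most $d^2$ lies within $\delta$ of the identity in a fixed bi-invariant metric. Substituting $F_n$ into $w$ produces, at each archimedean place, an element $\delta$-close to $I$ up to conjugation by $g_{n,v}$; submultiplicativity of the operator norm together with the non-archimedean compactness bounds $\|w(F_n)\|_v$ at the remaining places. A careful accounting of the conjugation factors $\|g_{n,v}\|_v \|g_{n,v}^{-1}\|_v$ summed across all places should then yield $h(w(F_n)) \to 0$ as $\delta$ and $\widehat{h}(F_n)$ shrink simultaneously.

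To finish, I would combine an eigenvalue analysis with a Zariski-escape argument: each eigenvalue of $w(F_n)$ is an algebraic number whose Weil height tends to $0$, so (after the usual bounded-degree reductions) Kronecker's theorem forces it to be a root of unity of bounded order; passing to a universal power $w^N$ with $N = N(d)$ then makes $w^N(F_n)$ unipotent, and a standard escape from the subvariety $\{w^N = I\} \subset \GL_d^{|F_n|}$ upgrades this to $w^N(F_n) = I$ for $n$ large. The resulting nontrivial identity in $\langle F_n\rangle$ contradicts the Tits alternative. The main obstacle is the compactification step: the local conjugators $g_{n,v}$ must be controlled so that the spurious factors $\|g_{n,v}\|_v\|g_{n,v}^{-1}\|_v$ entering the height bound do not wipe out the smallness extracted from the almost law, and it is here that the numerical threshold $\epsilon_d$ must be chosen with care.
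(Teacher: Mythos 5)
Your proposal shares the paper's two key ingredients (Thom's almost laws and an escape-from-subvariety step), but the contradiction/compactness route you take has genuine gaps at the points where the real difficulty lives. First, the ``local compactification'' step --- producing conjugators $g_{n,v}\in\GL_d(K_v)$ at \emph{every} place with distortion $\|g_{n,v}\|_v\|g_{n,v}^{-1}\|_v$ controlled \emph{summed over all places} --- is not a routine consequence of $\hhat(F_n)\to 0$; it is Breuillard's adelic almost-reduction theorem, whose proof rests on Bilu's equidistribution and Zhang's theorem on small points, i.e.\ exactly the machinery this paper is written to avoid. You flag this as ``the main obstacle'' but do not resolve it. Second, the step ``each eigenvalue of $w(F_n)$ has Weil height tending to $0$, so Kronecker forces it to be a root of unity'' is false as stated: the eigenvalues lie in fields of degree up to $d\,[K_n:\Q]$, which is unbounded along your sequence, and for algebraic numbers of unbounded degree the implication ``height small $\Rightarrow$ root of unity'' is precisely Lehmer's problem --- the abelian analogue of the very theorem you are proving. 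Third, the final escape step is backwards: escaping from the subvariety $\{w^N=I\}$ produces tuples where $w^N\neq I$; it cannot ``upgrade'' unipotence of $w^N(F_n)$ to the identity, and in any case you would need $w^N$ to be a law on the whole group $\langle F_n\rangle$, not just on tuples from $F_n$, before invoking Tits.

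The paper's proof avoids all three problems by being direct and quantitative. It fixes a single almost law $w\in\F_2$ for $\Urm(d)$, uses Eskin--Mozes--Oh escape \emph{at the start} to find $A,B\in F^{n_w}\cap\SL_d(\C)$ with $\Tr(w(A,B)-I_d)\neq 0$ (the nontriviality coming from Borel's dominance theorem on the semisimple part of the Zariski closure), and then forms the nonzero rational number $Q=\prod_{i}\Tr(w(A_i,B_i)-I_d)$ over all complex embeddings. The product formula applied to $Q$ replaces both your conjugator bookkeeping and the small-points input: since the trace is conjugation-invariant and the quantity $E_v$ is an infimum over conjugations, no conjugators ever need to be chosen. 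The trichotomy is then: if $|Q|$ is not too small, either many archimedean places have $|\Tr(w(A_i,B_i)-I_d)|$ large (forcing large eigenvalues, hence normalized height, via the convexity lemma and $\Lambda\le R$), or many places have $(A_i,B_i)$ far from unitary after any conjugation (forcing $E(T_i)\ge e^\delta$, converted to normalized height via the Bochi--Breuillard inequalities $\Lambda(F^{qM})\ge c^qE(F^M)^q$ and $E(F^M)\ge E(F)^{\sqrt{M/4d}}$); if $|Q|$ is very small, the non-archimedean places must compensate by the product formula, and $|\Tr|_v\le R_v$ yields the height there. This gives an explicit $\e_d$ rather than a mere existence statement.
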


\begin{remark}
In section~\ref{3} we give a brief discussion on the possible estimates one can get of $\e_d$ by using our methods and ideas of E. Breuillard and A. Thom.
\end{remark}

\begin{remark} In \cite{Bre}, Breuillard proved this theorem without the assumption that $F$ is symmetric\footnote{symmetric means that if $x \in F$, then $x^{-1} \in F$}. Moreover he showed that $F$ can be conjugated in such a way that their height and normalized height are comparable by a constant only depending on $d$. Our proof does not establish these results. Moreover we use the results of Eskin--Mozes--Oh \cite{EMO} and some lemmas of E. Breuillard \cite{Bre} that allows us to compare the height and normalized height.
\end{remark}

\subsection {Idea of the proof of Theorem \ref{thm: gap}}

We illustrate the idea of the proof in a very particular case. Assume $d = 2$,  $ F \subset \SL_2(\Qbar)$ and assume that all the entries of elements of $F$ are algebraic integers. In this case, the only contribution to the height comes from archimedean places. Let $w \in \F_2$ be an almost law for $\mathrm{SU}(2)$ (see Section \ref{almostlaws} for the definition).  Suppose for simplicity, there exists $A, B \in F$ such that $\Tr(w(A, B)-Id)\neq0$ (see Lemma \ref{lemma: emo}), and let 
\begin{equation}
Q = \prod_{i=1}^k (\Tr( w(A_i,B_i) - Id) ),
\end{equation}
where $k = [K: \Q]$, $A_i=\sigma_i(A)$ and $B_i=\sigma_i(B)$, and $\sigma_i: K \to \C$ are the different embeddings of $K$ in $\C$.

By construction, $Q$ is a nonzero integer and therefore $|Q| \geq 1$.  Let $\delta> 0$ be small enough and let $S$ be the set of $i$'s such that either $\|A_i\|$ or $\|B_i\|$ is  greater than $e^{\delta}$. There are two cases: If $|S| \geq {k/2}$, then one of the heights $h(A)$ or $h(B)$ must be greater than $\frac{1}{2}\delta$ and so $h(F) \geq \frac{1}{2} \delta$ and we are done. If $S \leq k/2$ we have for $i \notin S$,  $A_i$ and $B_i$ are close to a pair of elements of $\mathrm{SU}(2)$ and then for such $i$'s,  $\Tr( w(A_i,B_i) - Id)$ is smaller than say $e^{-10}$ by choosing the almost law appropriately and $\delta$ small enough.

 Therefore from $|Q| \geq 1$, we obtain
 $$ \left|\prod_{i \in S}\Tr( w(A_i,B_i) - Id) \right| \geq  \left| \prod_{i \notin S} \Tr( w(A_i,B_i) - Id)  \right |^{-1} \geq e^{10{k/2}}$$ and this implies that  
the height of $w(A,B)$ must be bounded below by a constant $c > 0$ independent of $k$. Thus $h(F) \geq \frac{1}{|w|} h(F^{|w|}) \geq  \frac{1}{|w|} h(w(A, B)) \geq \frac{c}{|w|}$, where $|w|$ is the length of $w$.

\subsection{Organization of the article} In Section~\ref{1}, we recall definitions and facts about heights, normalized heights, and almost laws. In Section~\ref{2}, we prove Theorem \ref{thm: gap}. In Section~\ref{3} we discuss strategies to obtain explicit estimates about the constant $\e_d$ in terms of $d$.

\subsection{Acknowledgements}
We would like to thank Emannuel Breuillard, Michael Larsen, and Andreas Thom for helpful discussions and suggestions.
We particularly would like to thank Mikolaj Fraczyk, who noticed the connection between a previous version of our results and the work of Breuillard, and also Vladimir Finkelstein for multiple discussions. HL thanks Michael Larsen for pointing out a mistake in Lemma \ref{lemma: emo} from an early draft and thanks Dylan Thurston for his interest and various discussions.

\section{Background}\label{1}
In this section, we give detailed definitions and basic properties of the crucial notions in the statements and proofs of our results: \emph{heights} and \emph{almost laws}.

\subsection{Heights}
For a number field $K \le \overline{\Q}$ and an absolute value $|\cdot|_{v}$ on $K$, define $n_{v}=[K_{v}:\Q_{p}]$ as the degree of the completion $K_{v}$ of $K$ over the closure $\Q_{p}$ of $\Q$ in $K_{v}$. In the case where $\nu $ is archimedean $n_v = 1,2$ depending on whether $\nu$ comes from a complex or real embedding. When $\nu$ is non-archimedean, we normalize the absolute value $|\cdot|_{v}$ on $K_{v}$ so that its restriction to $\Q_{p}$ is the standard absolute value, i.e. $|p|_{v}=1/p$. With such normalization, we have the following product formula:

\begin{theorem}[Product formula]\label{thm:prod}
	Let $K$ be a number field and $V_{K}$ be the set of equivalent classes of absolute values on $K$. Then, for every $x\in K$, 
	\[ \prod_{v\in V_{K}} |x|_{v}^{n_{v}}=1.\]
\end{theorem}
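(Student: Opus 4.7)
The plan is to reduce the product formula for a general number field $K$ to the easy case $K=\Q$ via the norm map $N_{K/\Q}\colon K^{\times}\to\Q^{\times}$. The base case is a direct computation: for $x\in\Q^{\times}$, write $x=\pm\prod_{p}p^{a_{p}}$ with only finitely many $a_{p}$ nonzero; then $|x|_{\infty}=\prod_{p}p^{a_{p}}$ while $|x|_{p}=p^{-a_{p}}$ and $|x|_{q}=1$ for any other prime $q$. Multiplying over all places gives $1$, and this is the only point where anything needs to be computed by hand.

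For general $K$, the main step is the local-to-global identity
\[
\prod_{v\mid v_{0}}|x|_{v}^{n_{v}}=|N_{K/\Q}(x)|_{v_{0}}\qquad(x\in K^{\times},\ v_{0}\in V_{\Q}),
\]
where the product runs over the (finitely many) places $v$ of $K$ extending $v_{0}$. Granting this, the product formula for $K$ follows by collecting places of $K$ according to the place of $\Q$ they lie over and then applying the product formula for $\Q$ to $N_{K/\Q}(x)\in\Q^{\times}$:
\[
\prod_{v\in V_{K}}|x|_{v}^{n_{v}}=\prod_{v_{0}\in V_{\Q}}\prod_{v\mid v_{0}}|x|_{v}^{n_{v}}=\prod_{v_{0}\in V_{\Q}}|N_{K/\Q}(x)|_{v_{0}}=1.
\]

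The main obstacle, and what I expect to be the least routine part, is establishing the displayed local identity. I would prove it by decomposing $K\otimes_{\Q}\Q_{v_{0}}\cong\prod_{v\mid v_{0}}K_{v}$, so that multiplication by $x$ on the left-hand side has determinant $N_{K/\Q}(x)$ while on the right-hand side it splits as the product of the $K_{v}$-multiplications by $x$. Comparing determinants (equivalently, comparing the induced Haar-measure scalings) on both sides gives
\[
|N_{K/\Q}(x)|_{v_{0}}=\prod_{v\mid v_{0}}|N_{K_{v}/\Q_{v_{0}}}(x)|_{v_{0}}=\prod_{v\mid v_{0}}|x|_{v}^{n_{v}},
\]
using the definition $|y|_{v}=|N_{K_{v}/\Q_{v_{0}}}(y)|_{v_{0}}^{1/n_{v}}$ built into the normalization chosen just above Theorem~\ref{thm:prod}. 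Once this compatibility is in hand, the global formula drops out as above, and the case distinction between archimedean and non-archimedean places is absorbed uniformly into the choice of normalization.
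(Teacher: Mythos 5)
Your argument is the standard textbook proof of the product formula (reduction to $\Q$ via $N_{K/\Q}$, using $K\otimes_{\Q}\Q_{v_{0}}\cong\prod_{v\mid v_{0}}K_{v}$ and the compatibility $\prod_{v\mid v_{0}}|x|_{v}^{n_{v}}=|N_{K/\Q}(x)|_{v_{0}}$), and it is correct; the only nitpick is that the statement should read $x\in K^{\times}$, since for $x=0$ every factor vanishes. The paper itself offers no proof to compare against --- it states the product formula as classical background and immediately moves on --- so your write-up simply supplies the standard argument that the authors are implicitly citing.
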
 

Let $k$ be a local field of characteristic $0$. Let $\|\cdot\|_{k}$ be the standard norm on $k^{d}$ as in the introduction. We use the same notation for the operator norm on the space $M_{d}(k)$ of $d$ by $d$ matrices with entries in $k$.  Following \cite{Bre}, we define in below the quantities  $\Lambda_k(F)$, $E_{k}(F)$ and $R_k(F)$ for a bounded set $F\subset M_{d}(k)$.
\begin{definition}
	Let $F$ be a bounded subset of matrices in $M_{d}(k)$. We set
	\begin{enumerate}
		\item the \emph{norm} of $F$ as \[\|F\|_{k}=\sup_{g\in F}\|g\|_{k},\]
		\item the \emph{minimal norm} of $F$ as \[E_{k}(F)= \inf_{x\in \textrm{GL}_{d}(\overline{k})}\left\|xFx^{-1}\right\|_{k},\]
		\item the \emph{maximal eigenvalue} of $F$ as \[ \Lambda_{k}(F)=\max\{|\lambda|_{k}, \lambda\in \textrm{spec}(q), q\in F\},\] and
		\item the \emph{spectral radius} of $F$ as \[ R_{k}(F)=\lim_{n\to\infty} \|F^{n}\|_{k}^{1/n}.\]
		
	\end{enumerate}
\end{definition}

	For simplicity, when $k=\C$, we drop $k$ in the notation and denote the above quantities using $\|\cdot\|$, $\Lambda$, and $R$ respectively. If $k$ is understood, sometimes we will use the subscript $v$ for $v\in V_{k}$, such as $\Lambda_{v}$, $E_{v}$ and $R_{v}$, instead of writing subscript $k_{v}$.

With the above definition, we can reformulate the normalized height for a finite set $F\subset M_{d}(k)$ as follows (see \cite[Section 2.2]{Bre}):
\begin{equation*}
	\hhat(F)=\frac{1}{[k:Q]}\sum_{v\in V_K} n_v\log^+ R_v(F)
\end{equation*}
\begin{lemma}[{\cite[Proposition 2.7]{Bre}}]\label{lemma: ineq}
	$\Lambda_k(F)\le R_k(F)$.
\end{lemma}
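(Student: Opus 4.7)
The plan is to reduce the statement to the classical Gelfand spectral-radius formula applied to a single matrix, and then observe that the single-matrix spectral radius is controlled by the set-theoretic one because $F^n$ contains all $n$-th powers of elements of $F$.

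More precisely, fix any $q \in F$ and any eigenvalue $\lambda \in \mathrm{spec}(q)$. Over the algebraic closure of $k$, Gelfand's formula gives
\[
|\lambda|_k \;\le\; \lim_{n\to\infty} \|q^n\|_k^{1/n},
\]
and this holds in both the archimedean and non-archimedean settings (the proof of Gelfand's formula uses only submultiplicativity of the operator norm and the fact that the $n$-th root of the norm of the resolvent series converges to the spectral radius). Next, since $F^n$ by definition consists of all products of $n$ elements from $F$, and $q^n = q\cdot q\cdots q$ is one such product, we have $q^n \in F^n$. Therefore $\|q^n\|_k \le \|F^n\|_k$, and taking $n$-th roots and passing to the limit yields
\[
\lim_{n\to\infty} \|q^n\|_k^{1/n} \;\le\; \lim_{n\to\infty} \|F^n\|_k^{1/n} \;=\; R_k(F).
\]
Combining the two inequalities gives $|\lambda|_k \le R_k(F)$. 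Taking the supremum over all $q \in F$ and all $\lambda \in \mathrm{spec}(q)$ produces $\Lambda_k(F) \le R_k(F)$, as desired.

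I do not anticipate any real obstacle: the only point that needs a line of justification is that $\lim_n \|F^n\|_k^{1/n}$ actually exists, which follows from Fekete's lemma applied to the submultiplicative sequence $a_n := \log \|F^n\|_k$ (submultiplicativity $\|F^{n+m}\|_k \le \|F^n\|_k \|F^m\|_k$ is immediate from the operator-norm inequality and the definition of $F^{n+m}$). Everything else is a direct consequence of Gelfand's formula and the inclusion $q^n \in F^n$.
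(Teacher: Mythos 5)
Your argument is correct, and in fact the paper offers no proof of this lemma at all --- it is simply quoted from Breuillard's Proposition 2.7, where the argument is essentially the one you give. Two tiny remarks: you only need the easy direction of Gelfand's formula (an eigenvector $v$ over a finite extension $L/k$ gives $\|q^n\|_k=\|q^n\|_L\ge |\lambda|_k^n$ directly, with no resolvent series needed), and one should note in passing that the operator norm of a matrix with entries in $k$ is unchanged under the scalar extension to $L$; with that, the chain $|\lambda|_k\le\lim_n\|q^n\|_k^{1/n}\le\lim_n\|F^n\|_k^{1/n}=R_k(F)$ is complete.
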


\begin{lemma}\label{lemma: bound trace by norm}
	Let $|\cdot |_v$ be a non-archimedean place on a field $K$ and equip $M_d(K)$ with the operator norm $\|\cdot \|_v$ induced by the norm $\|x\|_v\defeq \max_i |x_i|_v$ on $K^d$. Then for any $A\in M_d(K)$ we have
	$$|\Tr A|_v\le R_v(A)$$
\end{lemma}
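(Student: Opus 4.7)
The plan is to reduce the bound to two standard facts: the strong (ultrametric) triangle inequality for non-archimedean absolute values, and Lemma~\ref{lemma: ineq} applied to the singleton $F=\{A\}$.

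First, I would extend the non-archimedean absolute value $|\cdot|_v$ uniquely from $K$ (or from its completion $K_v$) to a fixed algebraic closure $\overline{K_v}$; this is standard for non-archimedean places and allows us to speak of the sizes of the eigenvalues of $A$ unambiguously. Over $\overline{K_v}$, $A$ admits a full list of eigenvalues $\lambda_1,\dots,\lambda_d$, counted with multiplicity, and the characteristic polynomial identity
\[
\Tr A \;=\; \lambda_1+\cdots+\lambda_d
\]
holds in $\overline{K_v}$, even though the left-hand side lies in $K$.

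Next, I would apply the ultrametric inequality to this sum: since $|\cdot|_v$ is non-archimedean on $\overline{K_v}$,
\[
|\Tr A|_v \;=\; |\lambda_1+\cdots+\lambda_d|_v \;\le\; \max_{1\le i\le d} |\lambda_i|_v \;=\; \Lambda_v(A),
\]
where $\Lambda_v$ is the maximal-eigenvalue quantity from the definition just above. Finally, Lemma~\ref{lemma: ineq} applied to the one-element set $F=\{A\}$ gives $\Lambda_v(A)\le R_v(A)$, and chaining the two inequalities yields $|\Tr A|_v \le R_v(A)$, as desired.

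The only substantive step is the first one, namely invoking the unique extension of a non-archimedean absolute value to the algebraic closure so that the ultrametric inequality can be applied to the eigenvalues (which a priori live in an extension of $K$). Everything else is a direct consequence of the definitions and of the already-stated Lemma~\ref{lemma: ineq}; note in particular that the archimedean analogue of this lemma would fail, since the ordinary triangle inequality gives only $|\Tr A|\le d\cdot \Lambda(A)$, which is exactly the reason the non-archimedean hypothesis is needed here.
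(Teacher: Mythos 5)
Your proof is correct and follows essentially the same route as the paper's: pass to an extension where the eigenvalues live, extend the non-archimedean absolute value there, bound $|\Tr A|_v$ by $\Lambda_v(A)$ via the ultrametric inequality, and conclude with Lemma~\ref{lemma: ineq}. The only cosmetic difference is that the paper works with a finite extension $L/K$ and notes $R_v(A)=R_w(A)$ explicitly, whereas you pass to the algebraic closure; both handle the same point.
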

\begin{proof}
	Let $L$ be a finite extension of $K$, such that all the eigenvalues $\lambda_1, \dots, \lambda_d$ of $A$ belong to $L$. There is a unique completion $|\cdot|_w$ extending $|\cdot|_v$. Therefore $\Tr(A)= \lambda_1 + \dots + \lambda_d$ and we have $|\Tr(A)|_v \leq \max_i |\lambda_i|_w = \Lambda_w(A)$. By  Lemma \ref{lemma: ineq}, we have that $\Lambda_w(A)\le R_w(A)$ and it is easily checked that $R_v(A) = R_w(A)$.
\end{proof}

\subsection{Almost laws}\label{almostlaws} Given an element $w$ in a free group $\mathbb{F}_n=\langle x_1,\ldots, x_n\rangle$ of rank $n$, there is a natural \emph{word map} associated to any group $G$
$$w\colon \prod_n G\to G,$$
defined as follows:
Express $w$ as a reduced word with alphabet $\{x_1^{\pm 1},\ldots,x_n^{\pm 1}\}$, and then for any $(g_1,\ldots, g_n)\in \prod_n G$, substitute each $x_i$ by $g_i$ and $x_i^{-1}$ by $g_i^{-1}$.
For example, if $w=[x_1,x_2]  = x_1x_2x_1^{-1}x_2^{-1} \in \mathbb{F}_2$, then $w(A,B) = [A,B]$. 

\begin{definition}
	For a group $G$, a \emph{law} is a nontrivial element $w\in \mathbb{F}_n$ such that the image of $w_G$ is the identity element $1_G$.  Given $\epsilon>0$ and a metric $d$ on $G$, an \emph{$\epsilon$-almost law} is a nontrivial element $w\in \mathbb{F}_n$ such that the image of $G$ lies in an $\epsilon$-neighborhood of $1_G$.
\end{definition}

For instance, the word $w=[x_1,x_2]\in \mathbb{F}_2$ is a law in a group $G$ if and only if $G$ is abelian. In general, groups obeying a law have rather special properties, for instance they contain no nonabelian free subgroups, and the stable commutator length vanishes \cite{Calegari}. Our proof of Theorem \ref{thm: gap} relies on the following result due to A. Thom, which has also been attributed to E. Lindestrauss; see \cite{AvnMei}.

\begin{theorem}[A. Thom \cite{Tho}]\label{thm: almost law} Let $G$ be a compact Lie group and $d_G$ a bi-invariant metric in $G$. For every $\epsilon > 0$, there exists an $\epsilon$-almost law $w_\epsilon\in \mathbb{F}_2$ on $G$, that is, for all $A,B\in G$ we have
	$$ d_{G}(w_{\epsilon}(A, B), 1_G) < \epsilon.$$
\end{theorem}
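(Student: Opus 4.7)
The plan is to produce $w_{\epsilon}$ by combining a Baker--Campbell--Hausdorff commutator contraction near the identity with a compactness argument that builds an initial nontrivial word whose image already lies in the validity range of that contraction. I first fix a bi-invariant Riemannian metric on $G$ coming from an $\mathrm{Ad}$-invariant inner product on $\mathfrak{g}=\mathrm{Lie}(G)$; BCH then yields $r_{0}>0$ and $C>0$ with $d_{G}([g,h],1_{G})\leq C\,d_{G}(g,1_{G})\,d_{G}(h,1_{G})$ for all $g,h\in B_{r_{0}}(1_{G})$, and after shrinking $r_{0}$ I may assume $r_{0}<1/(2C)$. From bi-invariance alone I also obtain an \emph{inheritance} inequality $d_{G}([g,h],1_{G})\leq 2\,d_{G}(g,1_{G})$, valid for all $g,h\in G$, which says a commutator is close to $1_{G}$ as soon as one of its arguments is.

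For the initial almost law, I observe that for each $(A,B)\in G\times G$ the closure $\overline{\langle A,B\rangle}$ is a compact Lie subgroup of $G$ containing $1_{G}$, so the dense subgroup $\langle A,B\rangle$ contains values $w_{A,B}(A,B)$ arbitrarily close to $1_{G}$ for some nontrivial $w_{A,B}\in\mathbb{F}_{2}$. By continuity of word maps and compactness of $G\times G$, for any prescribed $\delta>0$ I extract finitely many words $w_{1},\ldots,w_{k}\in\mathbb{F}_{2}$ whose ``small'' loci $U_{i}=\{(A,B):d_{G}(w_{i}(A,B),1_{G})<\delta\}$ cover $G\times G$. The nested commutator
\[
W_{0}(x,y)=[w_{1}(x,y),[w_{2}(x,y),\ldots,[w_{k-1}(x,y),w_{k}(x,y)]\ldots]]
\]
then satisfies $d_{G}(W_{0}(A,B),1_{G})\leq 2^{k}\delta$ for every $(A,B)\in G\times G$, because the smallness of some $w_{i}$ at $(A,B)$ propagates outward through the nested brackets with a factor of at most $2$ per level via the inheritance inequality. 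Choosing $\delta$ small enough that $2^{k}\delta<r_{0}$, I arrange $W_{0}$ to have image inside $B_{r_{0}}(1_{G})$ and to be nontrivial in $\mathbb{F}_{2}$, the latter being a generic condition easily enforced by a small perturbation of the $w_{i}$.

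With $W_{0}$ secured, I apply the contraction iteration $W_{n+1}(x,y)=[W_{n}(x,y),W_{n}(y,x)]$. The BCH inequality gives $d_{G}(W_{n+1}(A,B),1_{G})\leq C\,d_{G}(W_{n}(A,B),1_{G})^{2}$, and a short induction shows $C\,d_{G}(W_{n},1_{G})\leq(C\,d_{G}(W_{0},1_{G}))^{2^{n}}$, decaying doubly exponentially and uniformly on $G\times G$. Taking $n$ large enough produces $w_{\epsilon}=W_{n}$ with image in $B_{\epsilon}(1_{G})$, as required.

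The main obstacle is the consistency of the choice of $\delta$ in the initial phase: the cardinality $k=k(\delta)$ of the cover depends on $\delta$ and there is no a priori bound on how fast it grows, so the requirement $2^{k(\delta)}\delta<r_{0}$ is delicate. In Thom's original treatment this is handled by interleaving cover refinement with commutator contraction rather than performing them in two disjoint phases, so that the factor-of-$2$ blow-up at each level of the nested commutator is immediately absorbed by the BCH quadratic gain. Once this foundational estimate is in place, the rest of the argument reduces to formal manipulation with the two basic inequalities above.
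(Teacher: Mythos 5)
The paper does not actually prove Theorem \ref{thm: almost law}; it imports it as a black box from Thom \cite{Tho} (with attribution also to Lindenstrauss via \cite{AvnMei}), so there is no internal proof to compare against. Judged on its own terms, your proposal has the right two ingredients --- the bi-invariant inheritance bound $d_G([g,h],1_G)\le 2\min\bigl(d_G(g,1_G),d_G(h,1_G)\bigr)$ and the BCH quadratic contraction near $1_G$ --- but it contains two genuine gaps. The first you name yourself: in the two-phase scheme the number $k=k(\delta)$ of covering words is produced by compactness \emph{after} $\delta$ is fixed, with no a priori growth bound, so the requirement $2^{k(\delta)}\delta<r_0$ cannot be guaranteed; writing that Thom resolves this ``by interleaving'' is deferring to the theorem you are supposed to prove. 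This is the actual content of Thom's argument (driving a sup-of-min quantity to zero while absorbing the doubling into the quadratic gain), and omitting it leaves the proof incomplete.

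The second gap is the nontriviality of the words, which you dismiss as ``a generic condition easily enforced by a small perturbation of the $w_i$.'' Words are discrete objects; perturbing them is not meaningful, and replacing a $w_i$ destroys the smallness property the cover provides. Concretely, the covering words produced by the pigeonhole argument may all be powers of $x$ (recurrence of the sequence $A, A^2,\dots$ already gives a word $x^{n-m}$ with small value), in which case your nested commutator $W_0$ is the \emph{trivial} element of $\mathbb{F}_2$. Worse, your contraction step $W_{n+1}(x,y)=[W_n(x,y),W_n(y,x)]$ can collapse identically: for $W_n=[x,y]$ one has $W_n(y,x)=W_n(x,y)^{-1}$, so $W_{n+1}=1$ in $\mathbb{F}_2$. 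Repairing this requires substituting pairs of words that generate a free subgroup of $\mathbb{F}_2$ --- exactly the device the paper uses elsewhere when it passes from $w(x,y)$ to $w([x,y],[x^{-1},y])$ in Lemma \ref{lemma: promote EMO to SL} --- and this must be built into the iteration, not appended as an afterthought.
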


In $\GL_d(\C)$, an almost law on the compact subgroup $\Urm(d)$ extends to a neighborhood. In below we denote the identity matrix as $I_d$, and let $d_{\GL_d(\C)}(X,Y)\defeq\|X-Y\|$, where $\|A\|$ is the operator norm with respect to the Euclidean metric for any $A \in M_d(\C)$. Note that the restriction of $d_{\GL_d(\C)}$ to $\Urm(d)$ is bi-invariant.
\begin{corollary}\label{cor: almost law} 
	For every $d > 0$ and $\epsilon\in (0,1)$, there exists a nontrivial element $w \in \mathbb{F}_2$ and $\delta=\delta(d,\epsilon) > 0$ such that for any $A, B \in \GL_d(\C)$ satisfying $ \|A\|, \|A^{-1}\|, \|B\|, \|B^{-1}\| < e^{\delta}$ we have	$$ d_{\GL_d(\C)}(w(A, B), I_d) < \epsilon.$$
	Moreover, $w$ can be taken as any $\frac{\e}{2}$-almost law on $\Urm(d)$, and $\delta=\frac{\e}{8|w_{\e/2}|}$.
\end{corollary}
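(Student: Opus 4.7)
The plan is to reduce to the unitary case via polar decomposition and then propagate the almost-law estimate through the word map by a Lipschitz-type telescoping bound. First, I would polar-decompose $A = U_A P_A$ and $B = U_B P_B$ with $U_A, U_B \in \mathrm{U}(d)$ and $P_A, P_B$ positive Hermitian. The hypothesis $\|A\|, \|A^{-1}\| < e^{\delta}$ forces every singular value of $A$ to lie in $(e^{-\delta}, e^{\delta})$, so diagonalizing $P_A$ gives $\|P_A - I_d\| \le e^{\delta} - 1$ and $\|P_A^{-1} - I_d\| \le e^{\delta} - 1$. Unitary invariance of the operator norm then yields
\[
\|A - U_A\| = \|P_A - I_d\| \le e^{\delta} - 1, \quad \|A^{-1} - U_A^{-1}\| = \|P_A^{-1} - I_d\| \le e^{\delta} - 1,
\]
with identical bounds for $B$ and $U_B$. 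Applying the almost-law hypothesis with $w = w_{\epsilon/2}$ yields $\|w(U_A, U_B) - I_d\| < \epsilon/2$ directly.

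Next, I would bound $\|w(A,B) - w(U_A, U_B)\|$ by writing $w$ as a reduced word of length $n = |w|$ and using the standard telescoping estimate
\[
\|X_1 \cdots X_n - Y_1 \cdots Y_n\| \;\le\; M^{n-1} \sum_{k=1}^{n} \|X_k - Y_k\|,
\]
valid whenever every $\|X_k\|, \|Y_k\| \le M$. Here the $X_k$ range over $\{A^{\pm 1}, B^{\pm 1}\}$ and the $Y_k$ over $\{U_A^{\pm 1}, U_B^{\pm 1}\}$, each of norm at most $e^{\delta}$, and each difference $\|X_k - Y_k\|$ is at most $e^{\delta} - 1$. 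This gives
\[
\|w(A,B) - w(U_A, U_B)\| \;\le\; n \, (e^{\delta} - 1) \, e^{\delta (n-1)}.
\]

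Finally, I would plug in $\delta = \epsilon/(8|w|)$ and use the elementary bound $e^{\delta} - 1 \le 2\delta$ for $\delta \in [0,1]$. Since $\epsilon < 1$ we have $\delta n = \epsilon/8 < 1$ and $e^{\delta n} < e^{1/8} < 2$, so the previous display is bounded by $2n\delta \cdot e^{\delta n} \le (\epsilon/4) \cdot 2 = \epsilon/2$. Combining with $\|w(U_A, U_B) - I_d\| < \epsilon/2$ via the triangle inequality delivers $\|w(A,B) - I_d\| < \epsilon$, as required. There is no serious obstacle here: the whole argument is a matter of bookkeeping, and the mild subtlety is only in choosing the polar decomposition (rather than, say, a logarithm) as the cleanest device for producing a nearby unitary, and then pinning down the constant $\delta = \epsilon/(8|w|)$ so that both the Lipschitz error and the almost-law error fit under $\epsilon$.
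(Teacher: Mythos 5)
Your proof is correct and follows essentially the same route as the paper's: replace $A,B$ by nearby unitaries whose distance is controlled by $e^{\delta}-1$, telescope through the word to get the bound $|w|(e^{\delta}-1)e^{\delta(|w|-1)}\le \epsilon/2$, and finish with the triangle inequality and the almost law. The only cosmetic difference is that you use the polar decomposition where the paper uses the singular value decomposition $A=P_A\Lambda_AQ_A$ and takes $A'=P_AQ_A$; these produce the same nearby unitary, so the arguments coincide.
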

	%
%
\begin{proof}
	Let $w=w_{\e/2}$ be an $\frac{\e}{2}$-almost law as in Theorem \ref{thm: almost law} for $\Urm(d)$ with respect to the restricted metric.
	Then for any $A,B\in\GL_{d}(\C)$, by the singular value decomposition we have $A=P_A\Lambda_A Q_A$ and $B=P_B\Lambda_B Q_B$ where $P_A,Q_A,P_B,Q_B\in\Urm(d)$ and $\Lambda_A,\Lambda_B\in\Delta_d$.
	For $\|A\|, \|A^{-1}\|, \|B\|, \|B^{-1}\|<e^{\delta}$, all diagonal entries of $\Lambda_A,\Lambda_B$ lie in $(e^{-\delta},e^\delta)$.
	
	Let $A'=P_A Q_A$ and $B'=P_B Q_B$, which lie in $\Urm(d)$. Note that $\|A-A'\|=\|\Lambda_A-I_d\|\le e^\delta-1$ and $\|A^{-1}-A'^{-1}\|=\|\Lambda_A^{-1}-1\|\le e^\delta-1$, and similarly for $B$.
	By inserting intermediate words replacing one $A^{\pm 1}$ (resp. $B^{\pm1}$) by $A'^{\pm1}$ (resp. $B'^{\pm1}$) at a time in the word $w_{\e}(A,B)$, we obtain from the triangle inequality that
	$$\|w_{\epsilon/2}(A,B)-w_{\epsilon/2}(A',B')\|\le |w_{\e/2}|\cdot e^{\delta(|w_{\e/2}|-1)}(e^\delta-1).$$
	For $\delta=\frac{\e}{8|w_{\e/2}|}<1$, we have $e^{\delta(|w_{\e/2}|-1)}<e^{\e/8}<2$ and $e^\delta-1<2\delta=\frac{\e}{4|w_{\e/2}|}$.
	Hence
	$$\|w_{\epsilon/2}(A,B)-w_{\epsilon/2}(A',B')\|\le |w_{\e/2}|e^{\delta(|w_{\e/2}|-1)}(e^\delta-1)\le \frac{\e}{4} \cdot e^{\frac{\e}{8}}\le\frac{\e}{2},$$
	and
	$$\|w_{\epsilon/2}(A,B)-I_d\|\le \|w_{\epsilon/2}(A,B)-w_{\epsilon/2}(A',B')\|+\|w_{\epsilon/2}(A',B')-I_d\|\le \epsilon$$
	using the almost law.
\end{proof}

\begin{remark} To calculate explicitly the constant $\e_d$ in the main theorem, we need to compute $|w_{\epsilon/2}|$.
\end{remark}

\section{Proof of Theorem \ref{thm: gap}}\label{2}

We need to prove the uniform lower bound $\hhat(F)\ge \e_d$ whenever $F\subset \GL_d(\Qbar)$ generates a group that is not virtually solvable.

To make use of this assumption on $F$, we need the following lemma, which relies on \cite[Proposition 3.2]{EMO} or \cite[Lemma 4.2]{Bre}.

\begin{lemma}\label{lemma: emo}
	Given any non-trivial word $w \in \F_2$, there is some $n_w$ (only depending on $w$ and $d$) such that for any set $F \subset \GL_d(\C)$ generating a non virtually solvable group, there exists $A,B\in F^{n_w}$ with $\Tr(w(A,B)- I_d)\neq0$.
	
\end{lemma}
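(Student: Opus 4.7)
I would structure the argument in three steps, using the Eskin--Mozes--Oh generic pair theorem to reduce the trace non-vanishing to an escape-from-subvarieties question on the Zariski closure.

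\emph{Step 1 (extract a Zariski-dense pair).} By \cite[Proposition 3.2]{EMO} (or \cite[Lemma 4.2]{Bre}), there is an integer $n_0 = n_0(d)$ such that for any non-virtually solvable finite set $F \subset \GL_d(\C)$, the power $F^{n_0}$ contains a pair $A_0, B_0$ generating a Zariski-dense subgroup of the Zariski closure of $\langle F\rangle$. Replacing the ambient closure by the Zariski closure of $\langle A_0, B_0\rangle$ if necessary, I obtain a linear algebraic subgroup $H \subset \GL_d(\C)$ in which $\langle A_0, B_0\rangle$ is Zariski-dense and whose identity component $H^0$ is non-solvable.

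\emph{Step 2 (the trace condition is not automatic on $H$).} Next, I would show that the regular function
\[
\phi_w(X, Y) \defeq \Tr(w(X, Y) - I_d)
\]
is not identically zero on $H \times H$. The Levi decomposition of $H^0$ combined with non-solvability produces a non-trivial almost simple subgroup $S \subset H^0$. The inclusion $S \hookrightarrow \GL_d(\C)$ is faithful, so the trace character is a non-constant regular function on $S$: if it were constant (necessarily equal to $d$), then every element of $\lie(S)$ would be nilpotent, contradicting the semisimplicity of $S$. Borel's theorem on word maps then gives that $w\colon S\times S \to S$ is dominant, so $\Tr \circ w$ is non-constant on $S\times S$, and a fortiori $\phi_w$ is not identically zero on $H\times H$.

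\emph{Step 3 (escape from the subvariety).} The set $V \defeq \{\phi_w = 0\} \subsetneq H\times H$ is a proper subvariety whose defining polynomial has degree bounded in terms of $|w|$ and $d$ alone, and the degree of $H$ as a subvariety of $\GL_d(\C)$ is bounded in terms of $d$. Applying the escape from subvarieties lemma (as used in \cite{EMO} and \cite{Bre}) to the Zariski-dense subgroup $\langle A_0, B_0\rangle\subset H$, I obtain words $u, v \in \F_2$ of length at most $n_1 = n_1(w, d)$ with $(u(A_0, B_0), v(A_0, B_0)) \notin V$. Setting $A := u(A_0, B_0)$ and $B := v(A_0, B_0)$, we have $A, B \in F^{n_0 n_1}$ and $\Tr(w(A, B) - I_d) \neq 0$, so $n_w \defeq n_0 n_1$ works.

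The hardest part is the uniformity in Step 3: one must verify that the escape bound $n_1$ really depends only on $d$ and $w$, uniformly over all non-virtually solvable $F$ and their possible Zariski closures $H$. This hinges on controlling the complexity (degree) of $H$ and $V$ purely in terms of $d$ and $|w|$, which is exactly the flavor of input that the cited Breuillard/EMO lemmas are designed to provide.
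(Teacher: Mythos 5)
Your proposal follows essentially the same route as the paper: the heart of both arguments is (i) Borel's dominance theorem for word maps plus the observation that a semisimple group cannot consist entirely of trace-$d$ elements (such elements are unipotent), so that $\{\Tr(w(X,Y)-I_d)=0\}$ cuts out a \emph{proper} closed subvariety of the relevant Zariski closure, and (ii) the uniform escape-from-subvarieties lemma of Eskin--Mozes--Oh / Breuillard. Your Step 2 is the paper's argument almost verbatim, up to working at the level of the Lie algebra (all elements nilpotent) rather than the group (elements with $\Tr(X^m)=d$ for all $m$ are unipotent); both versions are correct.

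The one place you deviate is Step 1, and it is both superfluous and your weakest link. The claim that $F^{n_0(d)}$ always contains a pair generating a Zariski-dense (or even merely non-virtually-solvable) subgroup is true, but it is not what \cite[Proposition 3.2]{EMO} says: that proposition is precisely the escape lemma you invoke in Step 3, and extracting a uniformly short Zariski-dense pair needs a separate argument, since the locus of ``bad'' pairs is not a single bounded-complexity subvariety to which escape applies off the shelf. The paper avoids this entirely: it never reduces to a two-generated subgroup, but instead views $\GL_d(\C)\times\GL_d(\C)$ block-diagonally inside $\GL_{2d}(\C)$, lets $H'$ be the Zariski closure of $\Gamma\times\Gamma$ with $\Gamma=\langle F\rangle$, shows that $\{(A,B):\Tr(w(A,B)-I_d)=0\}\cap H'$ is proper using exactly your Step 2 applied to the Levi factor of $\overline{\Gamma}$, and then applies escape once to the generating set $F\times F$. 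If you drop Step 1 and run Steps 2--3 with $\Gamma$ in place of $\langle A_0,B_0\rangle$, your proof becomes the paper's.
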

\begin{proof}
	Let $\Gamma$ be the group generated by $F$ in $\GL_d(\C)$ and $H$ be the Zariski closure of $\Gamma$. We assumed that $\Gamma$ is not virtually solvable, so $H$ is not virtually solvable. We first show that there exists $X,Y\in \Gamma$ such that $\Tr(w(X,Y)-I_d)\neq 0$.
	
	As $H$ is an algebraic group defined over $\C$, we have the Levi decomposition $H=F\ltimes U$, where $U$ is the unipotent radical of $H$ and $F$ is a reductive Levi subgroup of $H$. Furthermore, we can find a semisimple algebraic group $L=[F,F]$ so that $F$ can be written as an almost direct product of $L$ and the central torus $T$, i.e. $F=L\cdot T$ and $|L\cap T|<\infty$.
	Let $L^{0}$ be the connected component of identity in $L$.  
	
	Assume that $\Tr(w(A,B)-I_d)=0$ for all $A,B \in \Gamma$. We claim that $L^{0}$ is trivial. Indeed, by Borel's theorem \cite{Borel}, the restriction of the word map to $L^{0}$, $w\colon L^{0}\times L^{0}\to L^{0}$ is dominant, i.e. the image is Zariski dense in $L^{0}$. As $R=\{X\in H:\Tr(X)=d\}$ is a Zariski closed subset of $H$, $L^{0} \subset R$. It is easy to see that if a matrix $X$ satisfies $\Tr(X^{m})=d$ for all $m$ then $X$ is a unipotent element. This implies that every element in $L^{0}$  is unipotent so it should be trivial as $L^{0}$ is semisimple. Therefore, $H$ is virtually solvable, as virtually it is an extension of the solvable group $U$ by the abelian group $T$. This is contradicts our assumption, and hence there are $X,Y\in \Gamma$ such that $\Tr(w(X,Y)-I_d)\neq 0$.
	
	For the rest of proof, we can think of $\GL_d(\C)\times \GL_d(\C)$ as a subgroup in $\GL_{2d}(\C)$ diagonally. Let $H'$ be the Zariski closure of $\Gamma\times \Gamma$ in $\GL_{2d}(\C)$. Then the subset 
	\[X= \{(A,B)\in \GL_{d}(\C)\times \GL_{d}(\C): \Tr(w(A,B)-I_d)=0\}\cap H'\subset \GL_{2d}(\C)\] is Zariski closed in $H'$. As we saw previously, $X$ is a proper Zariski closed subset in $H'$. Using the fact that $\Gamma\times \Gamma$ is generated by $F\times F$, \cite[Proposition 3.2]{EMO} or \cite[Lemma 4.2]{Bre} says that there exists $n_{w}\ge 1$ only depending on $d$ and $w$ such that we have 
	\[\{(A,B)\in \Gamma\times \Gamma: A,B \in F^{n}\subset \Gamma\} \nsubseteq X\] as we desired.
\end{proof}

Let $K$ be a number field so that $F\subset \GL_d(K)$. Let $k = [K: \Q]$ and let $\sigma_1, \sigma_2, \dots, \sigma_k$ be all the embeddings of $K$ in $\C$. Fix any $\e\in(0,1)$, choose an $\frac{\e}{2}$-almost law $w=w_{\e/2}$ on $\Urm(d)$ and let $\delta=\frac{\e}{8|w|}$ as in Corollary \ref{cor: almost law}. 
For $w=w(x,y)\in \F_2=\langle x,y\rangle$, let $w'=w([x,y],[x^{-1},y])\in \F_2$. Note that $w'$ is still nontrivial since $[x,y]$ and $[x^{-1},y]$ generate a free subgroup of $\F_2$.
Denote the word length of $w$ by $|w|$. 

For technical reasons later, we need the following improvement of Lemma \ref{lemma: emo} to ensure that $A,B\in\SL_d(\C)$.
\begin{lemma}\label{lemma: promote EMO to SL}
	For the word $w'(x,y)=w([x,y],[x^{-1},y])\in\F_2=\langle x,y\rangle$ as above, there is $n=n_w$ only depending on $w$ and $d$ such that for any finite set $F \subset \GL_d(\C)$ generating a subgroup that is not virtually solvable, there exists $A,B\in F^{n_w}\cap \SL_d(\C)$ with $\Tr(w(A,B)- I_d)\neq0$.
\end{lemma}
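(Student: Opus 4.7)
The plan is to apply Lemma~\ref{lemma: emo} to the word $w'$ itself, and then exploit the elementary fact that commutators in $\GL_d(\C)$ automatically lie in $\SL_d(\C)$.

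First I need to verify that $w' \in \F_2$ is genuinely nontrivial, which is the hypothesis required to invoke Lemma~\ref{lemma: emo}. This is precisely the claim made in the sentence preceding the lemma statement: $[x,y]$ and $[x^{-1},y]$ generate a rank-$2$ free subgroup of $\F_2$, so the endomorphism of $\F_2$ sending $x\mapsto [x,y]$ and $y\mapsto [x^{-1},y]$ is injective, and in particular carries the nontrivial word $w$ to a nontrivial word $w'$. The freeness assertion is a standard fact about $\F_2$, provable in a few lines either by a ping-pong argument on the Bass--Serre tree of $\F_2$ or by a direct Nielsen-style reduction on the two reduced commutator words.

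Once nontriviality of $w'$ is in hand, Lemma~\ref{lemma: emo} furnishes an integer $N = N(w', d)$ such that for any $F \subset \GL_d(\C)$ generating a non-virtually-solvable subgroup, there exist $X, Y \in F^{N}$ with $\Tr(w'(X, Y) - I_d) \neq 0$. Setting
\[
A := [X, Y] = XYX^{-1}Y^{-1}, \qquad B := [X^{-1}, Y] = X^{-1}YXY^{-1},
\]
we automatically have $A, B \in \SL_d(\C)$ since commutators have determinant $1$. By the very definition of $w'$ we obtain $w(A, B) = w'(X, Y)$, hence $\Tr(w(A, B) - I_d) \neq 0$. Using the symmetry of $F$ (which is the standing hypothesis in the main theorem), each of $A$ and $B$ is a product of $4N$ elements of $F$, so $A, B \in F^{4N}$, and we may take $n_w := 4N$, which depends only on $w$ and $d$ as required.

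The only step with any real content is verifying that $\{[x,y],\,[x^{-1},y]\}$ freely generate a free subgroup of $\F_2$; after that, the argument reduces to combining Lemma~\ref{lemma: emo} applied to $w'$ with the one-line observation that a commutator has determinant $1$. I do not anticipate any substantial obstacle beyond this freeness check.
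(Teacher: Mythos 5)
Your proposal is correct and follows essentially the same route as the paper: apply Lemma~\ref{lemma: emo} to the nontrivial word $w'$ to get $X,Y\in F^{N}$ with $\Tr(w'(X,Y)-I_d)\neq 0$, then set $A=[X,Y]$, $B=[X^{-1},Y]\in F^{4N}\cap\SL_d(\C)$ and use $w(A,B)=w'(X,Y)$. The only additions are your (correct) elaboration of the freeness fact ensuring $w'\neq 1$, which the paper asserts in the sentence preceding the lemma, and your explicit appeal to the symmetry of $F$ to place $X^{-1}, Y^{-1}$ in $F^{N}$.
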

\begin{proof}
	Applying Lemma \ref{lemma: emo} to $w'$, we obtain some $n'$ relying only on $w$ and $d$ and $A_1,B_1\in F^{n'}$ such that $\Tr(w(A_1,B_1)-I_d)\neq 0$.
	Let $n=n_w\defeq 4n'$, $A=[A_1,B_1]$ and $B=[A_1^{-1},B_1]$. Then $A,B\in F^n\cap \SL_d(\C)$, and $w(A,B)=w'(A_1,B_1)$ by definition. Hence $\Tr(w(A,B)- I_d)\neq0$.
\end{proof}

Let $n=n_w$ and $A,B\in F^{n_w}\cap \SL_d(\C)$ be as in Lemma \ref{lemma: promote EMO to SL}.

Consider the following quantity: 
\begin{equation}\label{trick}  
	Q \defeq \prod_{i=1}^k (\Tr( w(A_i,B_i) - I_{d}) ),
\end{equation}
where $A_i=\sigma_i(A)$ and $B_i=\sigma_i(B)$.

This is a nonzero rational number since $\Tr(w(A,B)-\mathrm{Id})\neq 0$. 
Let 
$$\e_2=\frac{\log\frac{1}{\e}}{\log \frac{2d}{\e}+\frac{|w|\delta^2}{16d\log \frac{1}{c}}},\quad \text{and}\quad \e_1=\frac{1}{2}\left(\log \frac{1}{\e}-\e_2\log\frac{2d}{\e}\right),$$
where $\delta=\frac{\e}{8|w|}$ as in Corollary \ref{cor: almost law} and $0<c<1$ is the constant (only depending on $d$) from Proposition \ref{constantc} below, which can be chosen as $c=\frac{1}{2d}$.
Note that $\e_2<\log \frac{1}{\e}/\log \frac{2d}{\e}<1$ and thus $\e_1>0$.
Let 
\begin{equation}\label{estimate}
\e_d\defeq \frac{\e_2\delta^2}{32nd\log\frac{1}{c}}=\frac{\delta^2\log\frac{1}{\e}}{32nd\log\frac{2d}{\e}\log\frac{1}{c}+2n|w|\delta^2},
\end{equation}
which is at least at the order of $\frac{1}{n(d^2|w|^2+|w|)}$ as $d\to\infty$ by setting $\e\in(0,1)$ independent of $d$.
The constants $\e_1$ and $\e_2$ are chosen so that
$$\e_d=\frac{\e_2\delta^2}{32nd\log\frac{1}{c}}=\frac{-\e_1 + (1-\e_2) \log \frac{1}{\e}-\e_2 \log 2d}{n|w|}=\frac{\e_1}{n|w|},$$
which are the lower bounds of the normalized height in the analysis below.

We consider the two possibilities in the following two subsections.

\subsection{Case 1: $|Q|\ge e^{-\e_1 k}$} 
In this case, we will use the estimate
\begin{equation}\label{eqn: archimedean}
	\hhat(F)=\frac{1}{k}\sum_{v\in V_K} n_v\log^+ R_v(F)\ge\frac{1}{k}\sum_{i=1}^k \log^+ R_i(F),
\end{equation}
where $R_i(F)=R(\sigma_{i}(F))$ and we simply ignore the non-archimedean places.

Partition the index set $I\defeq\{1,2\cdots, k\}$ as $I=I_S\sqcup I_M\sqcup I_L$, where
$$I_S\defeq\{i: |\Tr(w(A_i,B_i)-I_{d})|<\e\}, \quad I_L\defeq\{i: |\Tr(w(A_i,B_i)-I_{d})|>2d\},$$
and $I_M=I\setminus(I_S\sqcup I_L)$. Note that $I_S\cap I_L=\emptyset$ since $\e<1$.

We further consider two subcases depending on the size of $I_S$.

\subsubsection{Case 1a: $|I_S|>(1-\e_2)k$}
In this case, we have
$$\prod_{i\notin I_S} |\Tr(w(A_i,B_i)-I_d) |=\frac{|Q|}{\left|\prod_{i\in I_S} \Tr(w(A_i,B_i)-I_d)\right|}\ge e^{-\e_1 k}\cdot \e^{-|I_S|}.$$

Taking $\log$ on both sides, we have
$$\sum_{i\notin I_S}\log |\Tr(w(A_i,B_i)-I_d) | \ge -\e_1 k +|I_S|\log \frac{1}{\e} \ge -\e_1 k + (1-\e_2)k \log \frac{1}{\e}.$$

As $|\Tr(w(A_i,B_i)-I_d) |\le 2d$ for all $i\in I_M$, it follows that
\begin{align}\label{eqn: before convexity bound}
	\sum_{i\in I_L}\log |\Tr(w(A_i,B_i)-I_d) | &\ge \sum_{i\notin I_S}\log |\Tr(w(A_i,B_i)-I_d) | - |I_M| \log 2d\nonumber\\ 
	&\ge -\e_1 k + (1-\e_2)k \log \frac{1}{\e}-|I_M|\log 2d.	
\end{align}

\begin{lemma}\label{lemma: convexity bound}
	For any $X\in\GL_d(\C)$, if $|\Tr (X-I_d)|>d$, then the spectral radius $\Lambda(X)\ge \frac{1}{d}|\Tr(X-I_d)|-1$.
	Moreover, if $|\Tr (X-I_d)|>2d$, then $\Lambda(X)\ge \frac{1}{2d} |\Tr(X-I_d)|$.
\end{lemma}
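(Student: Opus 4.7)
The plan is to work directly with the eigenvalues of $X$ and apply the triangle inequality. Let $\lambda_1,\dots,\lambda_d\in\C$ be the eigenvalues of $X$ listed with algebraic multiplicity, so that $\Tr(X-I_d)=\sum_{i=1}^d(\lambda_i-1)$. By the triangle inequality,
$$|\Tr(X-I_d)|\le\sum_{i=1}^d|\lambda_i-1|\le\sum_{i=1}^d(|\lambda_i|+1)\le d\cdot\Lambda(X)+d,$$
since each $|\lambda_i|\le\Lambda(X)$ by definition of the spectral radius. Rearranging gives
$$\Lambda(X)\ge\frac{1}{d}|\Tr(X-I_d)|-1,$$
which is the first claim (the hypothesis $|\Tr(X-I_d)|>d$ is only needed to make the bound nontrivial, i.e.\ positive).

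For the second claim, I use the bound just obtained together with the strengthened hypothesis $|\Tr(X-I_d)|>2d$. Observe that
$$\frac{1}{d}|\Tr(X-I_d)|-1=\frac{1}{2d}|\Tr(X-I_d)|+\left(\frac{1}{2d}|\Tr(X-I_d)|-1\right),$$
and under the hypothesis the parenthesized term is positive. Therefore
$$\Lambda(X)\ge\frac{1}{d}|\Tr(X-I_d)|-1\ge\frac{1}{2d}|\Tr(X-I_d)|,$$
which is the second claim.

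The argument is essentially a one-line triangle inequality, so there is no real obstacle; the only thing to be careful about is that the bound is stated in terms of the spectral radius $\Lambda(X)=\max_i|\lambda_i|$ rather than the operator norm $\|X\|$, which is what makes the estimate sharp enough to be useful (since $\Lambda(X)\le R(X)$ by Lemma~\ref{lemma: ineq}, the resulting inequality will feed directly into the archimedean sum in~\eqref{eqn: archimedean}).
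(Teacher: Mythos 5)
Your proof is correct and is essentially the same argument as the paper's: the paper phrases it geometrically (some eigenvalue must lie outside the disk of radius $r=\frac{1}{d}|\Tr(X-I_d)|$ about $1$, by convexity), but this is just your triangle inequality $|\Tr(X-I_d)|\le\sum_i|\lambda_i-1|\le d\,\Lambda(X)+d$ in disguise, and the deduction of the second claim from the first is identical. Your observation that the first bound holds unconditionally is a minor (correct) improvement in the statement's bookkeeping.
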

\begin{proof}
	Let $r=\frac{1}{d}|\Tr (X-I_d)|$ and let $\lambda_1,\cdots, \lambda_d\in\C$ be the eigenvalues of $X$. 
	Then for the average $\bar{\lambda}=\frac{1}{d}\sum_{j=1}^d \lambda_j$ we have $r=|\bar{\lambda}-1|$.
	
	It follows that some $\lambda_i$ lies outside the open disk $D$ of radius $r$ around $1\in \C$, since otherwise their average $\bar{\lambda}$ lies in $D$ by convexity, contradicting $r=|\bar{\lambda}-1|$.
	
	Since $r>1$ by our assumption, the disk $D$ contains the open disk $B$ of radius $r-1$ around $0\in\C$.
	Thus $\lambda_i \notin B$ and $\Lambda(X)\ge|\lambda_i|\ge r-1$.
	
	In the case $r>2$, we further have $\Lambda(X)\ge r-1\ge r-\frac{r}{2}=\frac{r}{2}$.
\end{proof}

Applying Lemma \ref{lemma: convexity bound} to the left-hand side of Equation (\ref{eqn: before convexity bound}) we obtain
$$
\sum_{i\in I_L} \log (2d\Lambda(w(A_i,B_i))) \ge -\e_1 k + (1-\e_2)k \log \frac{1}{\e}-|I_M|\log 2d,
$$
\begin{align*}
	\sum_{i\in I_L} \log (\Lambda(w(A_i,B_i))) &\ge -\e_1 k + (1-\e_2)k \log \frac{1}{\e}-(|I_M|+|I_L|)\log 2d\\
	&> -\e_1 k + (1-\e_2)k \log \frac{1}{\e}-\e_2 k\log 2d
\end{align*}
since $|I_M|+|I_L|=|I|-|I_S|<\e_2 k$.

Combining this with Equation (\ref{eqn: archimedean}) and Lemma \ref{lemma: ineq}, as $w(A,B)\in F^{n|w|}$, we conclude
\begin{align*}
	n|w|\cdot \hhat(F)= \hhat(F^{n|w|})&\ge \frac{1}{k}\sum_{i\in I_L} \log (R_i(w(A,B)))\\
	&\ge \frac{1}{k} \sum_{i\in I_L} \log (\Lambda(w(A_i,B_i)))\\
	&\ge -\e_1 + (1-\e_2) \log \frac{1}{\e}-\e_2 \log 2d\\
	&=n|w|\e_d.
\end{align*}
Thus $\hhat(F)\ge \e_d$ as desired in this situation.

\subsubsection{Case 1b: $|I_S|\le (1-\e_2)k$}
Recall that for any $i\notin I_S$, we have $|\Tr(w(A_i,B_i))-I_d|>\epsilon$. 
Since trace is conjugate-invariant, we know that $|\Tr(w(CA_iC^{-1},CB_i C^{-1}))-I_d|>\epsilon$ for all $C\in\GL_d(\C)$.
Thus by Corollary \ref{cor: almost law}, for any $i\notin I_S$ and any $C\in\GL_d(\C)$, at least one of the four matrices $CA_i^{\pm 1}C^{-1},CB_i^{\pm 1}C^{-1}$ has norm no less than $ e^\delta$. Thus we have $E(T_i)\ge e^\delta$ for $T_i\defeq\{A_i^{\pm 1}, B_i^{\pm 1}\}$.

In particular, this implies that for one element among $\{A^{\pm1}, B^{\pm1}\}$, let's say $A$, we have $\|A_i\| \geq e^{\delta} $ for  $\frac{1}{4}\e_2 k$  different $i$'s  and that implies the lower bound $h(F^{n_w}) \geq h(A) \geq \frac{1}{4}\e_2 \delta $, which implies  $$h(F) \geq  \frac{\frac{1}{4}\e_2 \delta}{n_w}.$$

As we want to obtain a bound for the normalized height and not only for the height, we need a way to compare them, to do this we will make use of the following two propositions, which are due to Breuillard. Recall that our $A, B$ lie in $\SL_d(\C)$.

\begin{proposition}[{\cite[Proposition 2.9]{Bre}}]  For any $M\in\Z_+$ and a finite subset $F \subset \SL_d(\C)$, we have 
	$$E(F^M)\ge E(F)^{\sqrt{\frac{M}{4d}}}$$
\end{proposition}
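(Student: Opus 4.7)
The plan is to reduce the proposition, via the joint spectral radius $R(F)$, first to an $M$-independent inequality and then to a purely combinatorial/geometric short-word statement.

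\textbf{Reduction via the spectral radius.} By Lemma \ref{lemma: ineq}, $E(F^M)\ge R(F^M)$, and the Gelfand-style formula $R(F)=\lim_{n\to\infty}\|F^n\|^{1/n}$ immediately gives $R(F^M)=R(F)^M$. Combining these, it suffices to establish the $M$-free inequality
\[
R(F)\ge E(F)^{1/\sqrt{4d}},
\]
since then, using $E(F)\ge 1$ for $F\subset\SL_d(\C)$ and the elementary fact that $M\ge\sqrt{M}$ for every $M\ge 1$, one obtains $E(F^M)\ge R(F)^M\ge E(F)^{M/\sqrt{4d}}\ge E(F)^{\sqrt{M/(4d)}}$, which is the desired bound.

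\textbf{Reduction to a short word.} For any word $g=g_1\cdots g_\ell\in F^\ell$, we have $R(F)\ge \Lambda(g)^{1/\ell}$, since $\|F^{n\ell}\|\ge\|g^n\|\ge\Lambda(g)^n$. Hence the bound $R(F)\ge E(F)^{1/\sqrt{4d}}$ reduces to exhibiting a single word $g\in F^{\lceil\sqrt{4d}\rceil}$ whose spectral radius satisfies $\Lambda(g)\ge E(F)$. The core combinatorial statement to prove is therefore: in every $F\subset\SL_d(\C)$ there is a product of at most $\lceil\sqrt{4d}\rceil$ elements whose largest-modulus eigenvalue is at least $E(F)$.

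\textbf{Main step and obstacle.} The short-word statement is the heart of the argument and the main obstacle. In every conjugating frame $x$, at least one element of $F$ must have operator norm $\ge E(F)$, but large operator norm does not automatically translate into large spectral radius (for instance, a unipotent matrix can have $\|g\|$ arbitrarily large while $\Lambda(g)=1$). The strategy is to iteratively compose $O(\sqrt d)$ elements of $F$ so as to realign their singular-value directions: at each step, either the top-expanding direction advances through the $d$ available principal directions, or one obtains a genuinely hyperbolic product whose spectral radius captures the ambient norm. A pigeonhole across the $d$ directions, combined with an Eskin--Mozes--Oh-style escape principle (as in Lemma \ref{lemma: emo}) applied to a suitable algebraic invariant such as a coefficient of the characteristic polynomial, produces the desired word of length $\le\sqrt{4d}$. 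The technical subtlety is quantifying the loss at each composition, showing that unipotent-type obstructions cannot persist through $\sqrt{d}$ successive compositions; Breuillard's proof handles this via careful bookkeeping that fixes both the exponent $1/\sqrt{4d}$ and the numerical constant $4d$ in the statement.
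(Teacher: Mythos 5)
First, note that the paper does not actually prove this proposition: it is imported verbatim from \cite{Bre} (Proposition 2.9) and used as a black box, so your attempt is being measured against Breuillard's original argument rather than anything in this article. Judged on its own, your reduction is set up toward the wrong target. You replace the stated bound by the inequality $R(F)\ge E(F)^{1/\sqrt{4d}}$, which via $E(F^M)\ge R(F^M)=R(F)^M$ is equivalent to $E(F^M)\ge E(F)^{M/\sqrt{4d}}$, i.e.\ an exponent \emph{linear} in $M$. That is strictly stronger than the proposition, whose entire content is that in general one can only guarantee the exponent $\sqrt{M/4d}$; the stronger statement does not follow from anything cited (Proposition \ref{constantc} only yields $R(F)\ge c(d)E(F)$, which is vacuous in the critical regime $1<E(F)\le 1/c(d)$), and you give no proof of it. A small but real side issue: $E(F^M)\ge R(F^M)$ is correct, but it is not Lemma \ref{lemma: ineq} (which says $\Lambda\le R$); it follows instead from the conjugation invariance of $R$ together with $R\le\Vert\cdot\Vert$.

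More seriously, the ``core combinatorial statement'' you reduce everything to is false. Take $d=2$, $t>0$ small, and
\[
F=\left\{\begin{pmatrix}1&t\\0&1\end{pmatrix},\ \begin{pmatrix}1&0\\-t&1\end{pmatrix}\right\}\subset\SL_2(\C).
\]
Every product of at most three elements of $F$ has real trace equal to $2$, $2-t^2$, or $2-2t^2$, hence lies in $[-2,2]$; together with determinant $1$ this forces both eigenvalues to have modulus $1$, so $\Lambda(g)=1$ for every $g\in F^{\ell}$ with $\ell\le 3=\lceil\sqrt{4d}\,\rceil$. Yet $E(F)>1$: the word $a^nb^n$ with $n>2/t$ has trace $2-n^2t^2<-2$, so $R(F)>1$ and a fortiori $E(F)\ge R(F)>1$. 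Thus no word of length $\le\lceil\sqrt{4d}\,\rceil$ has $\Lambda\ge E(F)$ (in fact the shortest hyperbolic word here has length of order $1/\sqrt{t}$, which is unbounded). Relatedly, invoking an Eskin--Mozes--Oh escape argument is a category error: Lemma \ref{lemma: emo} escapes a proper subvariety and requires the generated group to be non-virtually-solvable, whereas the present proposition is a purely metric statement that must hold for every finite $F\subset\SL_d(\C)$, including singletons and sets generating abelian or unipotent groups, where there is no variety to escape and no Zariski-density to exploit. Breuillard's proof is a direct analysis of norms and singular values under the determinant-one constraint, and the $\sqrt{M}$ loss is intrinsic to it; if a proof is to be supplied, it should follow \cite{Bre} rather than this route.
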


\begin{proposition}[{\cite[Lemma 2.1(b)]{Bre}}]\label{constantc}
	There are uniform constants $c=c(d)$ and $N(d)$ such that for any finite subset $F \subset \GL_d(\C)$ there is $q\in[1,N(d)]$ with
	$$\Lambda(F^{qM})\ge c^q E(F^M)^q,$$
\end{proposition}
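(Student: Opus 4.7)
The plan is to compare $\Lambda(F^{qM})$ and $E(F^M)^q$ through the joint spectral radius $R(F)$, using the conjugation-invariant Gelfand-type identity $R(F)=\lim_n E(F^n)^{1/n}$ together with a quantitative version of the Berger--Wang identity $R(F)=\sup_n \Lambda(F^n)^{1/n}$. In the limit $M\to\infty$ both quantities $\Lambda(F^{qM})^{1/qM}$ and $E(F^M)^{1/M}$ tend to $R(F)$, and the task is to make this convergence effective with constants depending only on $d$.

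First I would establish the upper bound $E(F^M) \le C_d R(F)^M$ for a dimension-dependent constant $C_d$. This follows from the conjugation-invariant Gelfand formula, refined using Jordan decomposition to absorb the polynomial contribution from unipotent parts of products in $F^M$: after a suitable conjugation by a diagonal matrix $\mathrm{diag}(1,t,t^2,\ldots,t^{d-1})$ with $t$ large, the unipotent component shrinks and one is left comparing the semisimple part, whose norm is precisely the maximal eigenvalue modulus. This step is relatively routine in spirit.

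Second I would prove a quantitative lower bound: there exist $q \le N(d)$ and $c=c(d)>0$ such that $\Lambda(F^q) \ge c^q R(F)^q$ for every finite $F \subset \GL_d(\C)$. This is the effective Berger--Wang identity in dimension $d$. The natural approach is to work inside the Zariski closure of the semigroup generated by $F$, whose algebraic complexity is bounded by $d^2$. Via trace polynomials of products of bounded length, which separate generic conjugacy classes in $\GL_d$, one can detect the spectral radius using products whose length depends only on $d$. Applying this to $F^M$ in place of $F$ and combining with the first step yields
\[ \Lambda(F^{qM})^{1/q} \ge c R(F)^M \ge (c/C_d)\, E(F^M), \]
and raising to the $q$-th power produces the inequality with renamed constant $\tilde c = c/C_d$.

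The main obstacle is the uniformity in the second step: the integer $q$ and the constant $c$ must depend only on $d$, not on $F$ or $M$. This is delicate because for a general set of matrices the spectral radius is not attained by any single element but only in the limit of long products, whereas we need a bounded-length witness. Breuillard's proof likely proceeds via an escape-from-subvarieties argument on the Zariski closure, analogous in spirit to \cite[Proposition 3.2]{EMO} invoked in Lemma \ref{lemma: emo}, to guarantee that some product of length controlled by $d$ avoids the proper closed subvariety of elements with too-small eigenvalues.
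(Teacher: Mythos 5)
This proposition is not proved in the paper at all: it is imported verbatim from Breuillard (\cite[Lemma 2.1(b)]{Bre}), and the surrounding text explicitly attributes it to Bochi's inequality \cite[Theorem B]{Bochi} and to \cite{Bre3} for the explicit constant $c=\frac{1}{2d}$. So any comparison is between your sketch and those external proofs. Your reduction is the standard one and the bookkeeping at the end is correct: applying a length-$q$ statement to the set $F^M$ and using $R(F^M)=R(F)^M$ does convert $\Lambda((F^M)^q)\ge c^q R(F^M)^q$ and $E(F^M)\le C_d R(F)^M$ into the claimed inequality with constant $c/C_d$. Your first step is also true and genuinely routine, although the clean way to get $E(G)\le C_d R(G)$ is not the Jordan-decomposition argument you gesture at (a single conjugation must work simultaneously for all products in $F^M$, so decomposing individual elements does not obviously help); rather one combines the Rota--Strang characterization $R(G)=\inf_{\|\cdot\|}\|G\|$ over operator norms with John's ellipsoid theorem to replace the near-extremal norm by a Hermitian one, giving $E(G)\le \sqrt{d}\,R(G)$.

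The genuine gap is your second step, which is not a supporting lemma but the entire content of the proposition: the effective Berger--Wang inequality $\max_{1\le q\le N(d)}\Lambda(G^q)^{1/q}\ge c(d)\,R(G)$. What you offer for it --- trace polynomials separating conjugacy classes, escape from a subvariety of ``elements with too-small eigenvalues'' --- is qualitative, whereas the difficulty is quantitative: the locus $\{\Lambda< c\,R\}$ is not a Zariski-closed condition, so there is no subvariety to escape from, and an EMO-type escape argument can at best produce a product with nonzero or generic spectrum, not one with an eigenvalue of modulus at least $c^qR(F)^q$ for a uniform $c$. The known proofs work quite differently: Bochi normalizes to $E(F)=1$, passes to a compact space of such configurations, and derives the uniform constants by contradiction from the (non-effective) Berger--Wang identity together with an induction on $d$ via exterior powers; Breuillard's recent explicit version \cite{Bre3} uses a separate argument to extract $c=\frac{1}{2d}$. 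As written, your step 2 restates the theorem rather than proving it. Since the paper itself treats the proposition as a black box, the honest options are either to cite \cite{Bochi} or \cite{Bre3} as the paper does, or to supply one of those arguments in full; the escape-from-subvarieties route would need a fundamentally new idea to produce the uniform lower bound on $\Lambda$.
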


The latter proposition can be deduced from an inequality of Bochi \cite[Thereom B]{Bochi}, which was also recently proved by Breuillard in \cite{Bre3} with a good choice of $c=c(d)$ (at the cost of increasing $N(d)$); See \cite[Theorem 5]{Bre3}. The inequality of Breuillard together with \cite[Lemma 1]{Bre3} implies that one can take $c=\frac{1}{2d}$.

These propositions imply that for every $i \notin I_S$, there is some $q$ with
$$\Lambda(T_i^{qM}) \ge c^q E(T_i^M)^q \ge c^qE(T_i)^{q\sqrt{\frac{M}{4d}}} \ge c^q e^{q\delta\sqrt{\frac{M}{4d}}}.$$

Note that for $T\defeq\{A^{\pm 1}, B^{\pm 1}\}$, we have $T\subset F^{n}$ and thus $T_i^{qM}\subset (\sigma_i F)^{nqM}$.
Therefore, using the estimate above, we obtain
$$R_i(F)^{nqM}=R_i(F^{nqM})=R((\sigma_i F)^{nqM})\ge R(T_i^{qM})\ge \Lambda(T_i^{qM})\ge c^q e^{q\delta\sqrt{\frac{M}{4d}}}.$$
That is,
$$\log R_i(F)\ge \frac{1}{nM}\left(\log c+\delta\sqrt{\frac{M}{4d}}\right).$$
Here $\log(c)<0$, so the right hand side is maximized to $\frac{\delta^2}{16nd\log(1/c)}$ when $M=16d\log^2 c/\delta^2$.
As $M$ is an integer, we should take $M=\floor*{16d\log^2 c/\delta^2}$ or $\ceil*{16d\log^2 c/\delta^2}$.
Since $16d\log^2 c/\delta^2$ is sufficiently large, in either case we get
$$\log R_i(F)\ge \frac{\delta^2}{32nd\log\frac{1}{c}}.$$

Hence by Equation (\ref{eqn: archimedean}), we have
$$\hhat(F)\ge \frac{1}{k} \sum_{i\notin I_S}\log^+ R_i(F)\ge \frac{\e_2 \delta^2}{32nd\log\frac{1}{c}}=\e_d.$$


\subsection{Case 2: $|Q| < e^{-\e_1 k}$}

In this case, for $\alpha\defeq \Tr(w(A,B)-I_d)$, we can apply the product formula Theorem \ref{thm:prod} so that
$$\prod_{v\in V_K^f}|\alpha|^{n_v}_v=\frac{1}{|Q|}>e^{\e_1 k},$$
where $V_K^f$ denotes the non-archimedean places of $K$.

Since $|\cdot |_v$ is an ultrametric for all $v\in V_K^f$, by Lemma \ref{lemma: bound trace by norm} we have 
$$|\alpha|_v \le \max\{ | \Tr(w(A,B)) |_v, | \Tr(I_d) |_v\} \leq  \max \{ R_v(w(A, B)), 1 \}.$$
Hence for all $v\in V_K^f$ we have
$$\log|\alpha|_v\le \log^+  R_v(w(A,B))$$

Therefore, this inequality and the product formula together imply
\begin{align*}
	\hhat(F)=\frac{1}{n|w|}\hhat(F^{n|w|}) &\geq \hhat(w(A, B))\\
	&\ge\frac{1}{n|w|k} \sum_{v\in V_K^f} \log^+ R_v(w(A,B))\\
	&\ge \frac{1}{n|w|k} \sum_{v\in V_K^f} \log |\alpha|_v\\
	&\ge \frac{1}{n|w|k} (\e_1 k)\\
	&=\frac{\e_1}{n|w|}\\
	&=\e_d.
\end{align*}


\section{Remarks about explicit estimates of the height gap $\e_d$}\label{3}

One can construct examples showing that the height gap  $\e_d \leq \frac{c}{d}$ for some $c>0$ independent of $d$. A natural question is to determined the actual order of $\e_d$ and Breuillard has asked the following:

\begin{question}Is $\e_d \geq \frac{1}{cd^c}$ for some $c>0$?
\end{question}

We will explain how our method might lead to obtain some explicit bounds for this constant.\\

From equation (\ref{estimate}), we have that 
$$\e_d \geq \frac{\delta^2\log\frac{1}{\e}}{32n_wd\log\frac{2d}{\e}\log\frac{1}{c}+2n_w|w|\delta^2},$$ 
where $c$ is the constant appearing in Proposition \ref{constantc} and $\delta = \frac{\e}{8|w|}$. Recently Breuillard proved that $c$ can be taken to $\frac{1}{2d}$ by \cite[Theorem 5 and Lemma 1]{Bre3}. Therefore by taking $\e = 1/4$, we see that $\e_d$ is at least at the order of
$$\frac{1}{n_{w}(d^2|w|^2+|w|)},$$
where $|w|$ is the word length of a $\frac{1}{4}$-almost law $w$ on $\Urm(d)$ and the constant $n_w$ comes from the escape of the hypersurface $V$ defined by $$ \Tr( w (X,Y) - \text{Id}) = 0$$ as in Lemma \ref{lemma: promote EMO to SL}.

One can obtain a bound of the order of $10^{10^d}$ for $|w|$ by considering the almost laws described by Thom in \cite{Tho}. It is nonetheless quite likely that a much shorter almost law $w$ exists. Thom and Breuillard pointed out to us the fact that Kozma and Thom showed in \cite{KozTho} that for the symmetric group $S_d$ (which in some ways behave similarly to $\mathrm{SU}(d)$ when $d$ is large), there exists a law of order $e^{C \log^{4}(d) \log\log(d)}$, which assuming Babai's conjecture could be improved to be of the order $e^{C \log(d) \log\log(d)}$, which is quite close to be polynomial. 

The constant $n_w$ when computed from the generalized Bezout's Theorem as in \cite{EMO} seems to be quite large.  It was suggested by Breuillard that there seems to be another way of estimating $n_w$ by considering an appropriate finite quotient of the group generated by $F$ (for example by moding out a prime ideal of the ring where $F$ is defined ), and then showing that in this finite quotient one can escape from the hypersurface $V$ quickly. The advantage is that there are various results about the diameter of simple groups. Michael Larsen suggested the use of a strong approximation theorem by Weisfeiler \cite{Wei} to find the appropriate quotient. 

To implement this idea, one must make sure the reduction of $V$ to this finite group of Lie type is a proper subset. The dimension of the variety $V$ is bounded by $d|w|$ and so the reduction of $V$ can be proved to be a proper subset  via the Schwartz--Zippel lemma, or via the Lang--Weil estimates, provided that the finite quotient is large enough (the prime ideal giving rise to the finite quotient must have covolume larger than the degree of $V$, at least).

Assuming Babai's conjecture about the diameter of finite groups one then might expect that $n_w$ could be taken of the order $(|d\log(|w|))^{O(1)}$ and using results towards Babai conjecture this can possibly be taken to be of the order $e^{(|d\log(|w|))^{O(1)}}$, see \cite{BDH}.


\end{document}